\title{Distributions of rational points on Kummer Varieties}
\DeclareMathOperator{\Pic}{Pic}
\DeclareMathOperator{\Gal}{Gal}
\def\Q{\mathbb{Q}}
\def\Z{\mathbb{Z}}
\newcommand{\Span}[1]{\left<#1\right>}
\newcommand{\algcl}{^{\textrm{alg}}}
\DeclareSymbolFont{cyrletters}{OT2}{wncyr}{m}{n}
\DeclareMathSymbol{\Sha}{\mathalpha}{cyrletters}{"58}
\newcommand{\bbF}{{\mathbb F}}
\newcommand{\bbP}{{\mathbb P}}
\newcommand{\bbQ}{{\mathbb Q}}
\newcommand{\bbZ}{{\mathbb Z}}
\newcommand{\cC}{{\mathcal C}}
\newcommand{\ra}{\rightarrow}
\newtheorem{definition}{Definition}[section]
\newtheorem{proposition}[definition]{Proposition}
\newtheorem{lemma}[definition]{Lemma}
\newtheorem{theorem}[definition]{Theorem}
\newtheorem{claim}[definition]{Claim}
\newtheorem{conjecture}[definition]{Conjecture}
\newtheorem{condition}[definition]{Condition}
\def\defeq{\stackrel{\text{\tiny def}}{=}}
\author{David Holmes and Ren\'e Pannekoek}
\date{\today}
\address{Mathematisch Instituut\\ 
Universiteit Leiden\\
Postbus 9512\\
2300 RA Leiden\\
Netherlands}
\email{d.s.t.holmes@umail.leidenuniv.nl, rene@math.leidenuniv.nl}
\keywords{}
\newcounter{nootje}
\newcommand{\beq}{\begin{equation}}
\newcommand{\eeq}{\end{equation}}
\newcommand{\beqs}{\begin{equation*}}
\newcommand{\eeqs}{\end{equation*}}
\renewcommand{\k}{k}
\renewcommand{\l}{l}
\newcommand{\hh}{\operatorname{h}}
\newcommand{\rk}{\operatorname{rk}}
\begin{document}
\maketitle
\begin{abstract}
We prove several results on the number of rational points on open subsets of Kummer varieties of arbitrary dimension. Some of our results are unconditional, and others depend on the Parity Conjecture (a corollary of the Conjecture of Birch and Swinnerton-Dyer). As examples, we show that (conditional on the Parity Conjecture) all odd-dimension Kummer varieties over $\bbQ$ which are quotients of absolutely simple abelian varieties have dense rational points, and we construct an infinite family of K3 surfaces with dense rational points. 
\end{abstract}
\tableofcontents

\section{Introduction}

In this paper, we prove several lower bounds for the number of rational points on Kummer varieties over number fields. All of our bounds are valid on any open subset of the Kummer, and so we are not just detecting points on some lower-dimensional subvariety. Most of our bounds are polynomial in nature, though some are exponential (see Section \ref{sec:notation} for the statements). 

The case of dimension 1 is of course uninteresting. It is also exceptional, in so much as it is the only dimension in which Kummer varieties have negative Kodaira dimension (the dimension being zero in higher dimensions). Relatively few results are known on the distribution of rational points in dimensions 3 and above, outside the case of abelian varieties and rational varieties; we make a small contribution to this. 

In dimension 2, Kummer varieties are examples of K3 surfaces, whose rational points are widely studied. In general, Kummer surfaces are not elliptic fibrations (over the ground field), but we do not know whether they may have infinite automorphism groups. Thus it is not clear whether they are covered by the potential-density results  of Bogomolov and Tschinkel (\cite{bogomolov1999density}). 

The authors wish to thank Jan Steffen M\"uller for suggesting this problem and for many helpful discussions along the way. They also wish to thank Bas Edixhoven, Hendrik Lenstra, Frans Oort and Marco Streng for their help with understanding endomorphism rings of abelian surfaces, and Ronald van Luijk for his help with the automorphism groups. 

\section{Notation and statements of main results}\label{sec:notation}

Throughout, $A$ will denote a principally polarised abelian variety with polarising line bundle $\vartheta$ over a number field $k$, and $K = A/\{\pm 1\}$ its (singular) Kummer variety. We also assume that $A$ is simple over every quadratic extension of $k$. Write $\pi:A \ra K$ for the quotient map (of degree 2). 
The polarisation on $A$ induces a canonical height $h$ with respect to $2\vartheta$ (the normalisation will not matter substantially) in the sense of N\'eron-Tate - it is a positive definite quadratic form (in particular, we are working with logarithmic heights). We assume the height to be normalised so that it is invariant under taking algebraic field extensions. Now given a point $p \in K(k\algcl)$, there exists a point $q \in A(k\algcl)$ such that $\pi(q) = p$, and we set $h(p) = h(q)$; note that this is independent of the choice of $q$. 

Given an algebraic variety $X/k$ with a non-degenerate height function $h$, and $l/k$ a finite extension, we set
\beqs
n_{X,l, h}(B) = \#\{ p \in X(l) : h(p) \le B \}. 
\eeqs
If $X$ is a subset of $A$ or of $K$, then we take the height on $X$ to be that induced from the height on $A$ or respectively $K$ as defined above, and we omit it from the notation. Similarly if $l=k$ then we omit it from the notation.

Our main results are as follows:

\begin{theorem}\label{thm1} Given an integer $n>0$, there exists a number field $f/k$ such that on any dense open subset $U \subset K$, there exists a constant $c_U > 0$ with
\beq
n_{U,f}(B) > c_U B^n. 
\eeq
\end{theorem}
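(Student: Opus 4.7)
The plan is to produce, over a suitable finite extension $f/k$, a finitely generated subgroup $\Gamma \subseteq A(f)$ of rank $2n$ that is Zariski-dense in $A$; lattice-point counting for the N\'eron-Tate quadratic form combined with Faltings' theorem will then give the bound after pushing forward along $\pi$.

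To build $\Gamma$, consider the $2n$ projection morphisms $\pi_i : A^{2n}\to A$; they define $\bbZ$-linearly independent points in $A(k(A^{2n}))$, since any relation $\sum m_i\pi_i=0$ of morphisms forces every $m_i=0$. By Silverman's specialization theorem, the set of $s\in A^{2n}(k\algcl)$ for which the specializations $p_i(s) := \pi_i(s)$ fail to be linearly independent is thin. To additionally ensure that $\Gamma := \langle p_1(s),\ldots,p_{2n}(s)\rangle$ is Zariski-dense in $A$, I would further require $s$ to avoid, for each proper abelian subvariety $B\subsetneq A$, the closed locus $\{s\in A^{2n} : s_i - s_1 \in B \text{ for all } i\}$. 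Since there are countably many such $B$, a Baire-category argument in $A^{2n}(\bbC)$ combined with the density of $A^{2n}(k\algcl)$ in $A^{2n}(\bbC)$ produces an $s$ avoiding all these conditions. Set $f = k(s)$.

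With $\Gamma$ in hand, $h|_\Gamma$ is a positive definite quadratic form on $\Gamma\otimes\bbR\cong\bbR^{2n}$, so standard lattice-point counting gives $\#\{\gamma\in\Gamma : h(\gamma)\le B\} = c_0 B^n + O(B^{n-1/2})$ for some $c_0>0$. Let $V := \pi^{-1}(K\setminus U)\subsetneq A$. By Faltings' theorem (Mordell-Lang), $V(k\algcl)\cap\Gamma$ is a finite union of cosets $\gamma_j + (\Gamma\cap B_j)$ where each $B_j\subsetneq A$ is a proper abelian subvariety with $\gamma_j + B_j\subseteq V$. Zariski-density of $\Gamma$ in $A$ forces its image in $A/B_j$ to have positive rank, so $\rk(\Gamma\cap B_j)\le 2n-1$ and each coset contributes at most $O(B^{n-1/2})$ points of height $\le B$. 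Combining, and using that $\pi$ is at most $2$-to-$1$ on $f$-points while preserving $h$, we get $n_{U,f}(B)\ge \tfrac{1}{2}c_0 B^n - O(B^{n-1/2}) > c_U B^n$ for $B$ sufficiently large (with $c_U$ shrunk if necessary to accommodate small $B$).

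The main technical obstacle is the first step: arranging both $\bbZ$-linear independence and Zariski-density of $\Gamma$ simultaneously, since over the countable field $k\algcl$ we must avoid a countable family of proper closed conditions on $A^{2n}$, which is handled by the Baire/specialization argument sketched above. The remaining ingredients---Mordell-Lang and lattice-point counting---are standard once $\Gamma$ has been produced. Note that this approach does not require the hypothesis that $A$ be simple over every quadratic extension of $k$; that condition is presumably used for the other theorems in the paper.
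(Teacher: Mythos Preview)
Your overall architecture---raise the rank over a finite extension, invoke Faltings/Mordell--Lang to control points outside $\pi^{-1}(U)$, then count lattice points for the N\'eron--Tate form---is exactly the paper's strategy. The paper's proof is a one-liner: it simply observes that for any $r>0$ one may find a finite extension $f/k$ with $\rk A(f)\ge 2r$, and then the first paragraph of the proof of Proposition~\ref{prop:basic} (standard counting plus Lemma~\ref{lem:M-L}) finishes. No specialization theorem, no Baire category, no explicit construction of $\Gamma$ is used.

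There is, however, a genuine gap in your Step~1. The sentence ``a Baire-category argument in $A^{2n}(\bbC)$ combined with the density of $A^{2n}(k\algcl)$ in $A^{2n}(\bbC)$ produces an $s$ avoiding all these conditions'' does not work: $A^{2n}(k\algcl)$ is countable, and density of a countable set in $A^{2n}(\bbC)$ does \emph{not} guarantee that it meets the complement of a countable union of proper closed subvarieties (compare $\bbQ\subset\bbR$ versus the irrationals). So as written you have not produced an algebraic $s$. Relatedly, the locus ``$s_i-s_1\in B$ for all $i$'' is not quite the obstruction to Zariski-density of $\Gamma$: the closure of $\Gamma$ could be a nontrivial union of torsion cosets of $B$ even when some $s_i-s_1\notin B$.

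Both issues disappear if you use the standing simplicity hypothesis you tried to discard. Take $f/k$ finite Galois with $\rk A(f)\ge 2n$ (this is standard; e.g.\ Frey--Jarden). Then $A(f)$ is $\Gal(f/k)$-stable, so its Zariski closure is a closed subgroup of $A$ defined over $k$; since $A$ is simple over $k$ and $\rk A(f)>0$, this closure is all of $A$. Now your Steps~2--5 go through verbatim with $\Gamma=A(f)$. This is essentially the paper's argument, except that the paper leans on Lemma~\ref{lem:M-L} (which assumes simplicity over $f$) to conclude the complement of $\pi^{-1}(U)$ meets $A(f)$ in a finite set, whereas your rank-drop estimate via Mordell--Lang only needs Zariski-density of $A(f)$, hence only simplicity over $k$. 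In that sense your Steps~3--4 are a slight sharpening of the paper's appeal to Lemma~\ref{lem:M-L}; but the claim that the proof needs no simplicity hypothesis at all is not supported by the argument you gave.
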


\begin{theorem}\label{thm2} Suppose that the Parity Conjecture holds, and suppose that the abelian variety $A$ satisfies one of the conditions given in \ref{conditions_on_A} (for example, $k = \mathbb{Q}$ and $\dim(A)$ is odd). Then for any dense open subset $U$ of $K$, there exists a constant $c_U >0$ such that
\beq
n_U(B) > c_U B^{1/2}. 
\eeq
In particular, rational points are Zariski dense. 
\end{theorem}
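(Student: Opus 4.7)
The plan is to produce a quadratic twist $A^\chi$ of $A$ of positive Mordell-Weil rank over $k$, and then to transfer its rational points to $K$ via the canonical identification $A^\chi/\{\pm 1\} = K$. The main obstacle lies at the very first step: one must exhibit a quadratic character $\chi$ of $\Gal(k\algcl/k)$ with global root number $W(A^\chi) = -1$, which requires a careful analysis of how local root numbers transform under quadratic twisting. The conditions in \ref{conditions_on_A} (for instance $k = \mathbb{Q}$ with $\dim A$ odd) are precisely what makes this root-number calculation feasible. Granted such a $\chi$, the Parity Conjecture forces $\rk A^\chi(k)$ to be odd, hence at least $1$, so I may pick a point $P \in A^\chi(k)$ of infinite order.

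Next, since the twisting cocycle takes values in the central involution $\{\pm 1\} \subset \mathrm{Aut}(A)$, which acts trivially on $K$, one obtains a canonical $k$-isomorphism $A^\chi/\{\pm 1\} \cong K$ and hence a degree-$2$ morphism $\pi^\chi : A^\chi \to K$. Pulling back the height on $K$ along $\pi^\chi$ recovers the N\'eron-Tate height on $A^\chi$ attached to the induced principal polarisation, because over $k\algcl$ this pullback coincides with the pullback of $2\vartheta$ via $\pi$. Quadraticity of the N\'eron-Tate height then gives $\#\{n \in \mathbb{Z} : h(nP) \le B\} \sim 2\, h(P)^{-1/2} B^{1/2}$. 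To route the points into a prescribed dense open $U \subset K$, let $V := (\pi^\chi)^{-1}(U)$, a nonempty (hence dense) open in $A^\chi$, and set $Z := A^\chi \setminus V$. The hypothesis that $A$ is simple over every quadratic extension of $k$ implies that $A^\chi$ is simple over $k$, since $A^\chi$ and $A$ become isomorphic after a quadratic extension. By Faltings' theorem (Mordell-Lang) applied to the rank-one subgroup $\mathbb{Z}P \subset A^\chi(k)$, the intersection $\mathbb{Z}P \cap Z$ is a finite union of cosets of subgroups of $\mathbb{Z}P$; the coset of a nontrivial subgroup would be Zariski-dense in $A^\chi$ by simplicity, forcing $Z = A^\chi$ and contradicting the choice of $V$. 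Hence $\mathbb{Z}P \cap Z$ is finite.

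Combining, at least $c B^{1/2} - O(1)$ of the multiples $nP$ of height at most $B$ lie in $V(k)$, and their images under the degree-$2$ map $\pi^\chi$ yield at least $\tfrac{1}{2} c B^{1/2} - O(1)$ distinct points of $U(k)$ of height at most $B$, furnishing a constant $c_U > 0$ as required. Zariski density of $K(k)$ follows by applying the bound to every nonempty open $U \subset K$.
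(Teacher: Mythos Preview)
Your approach is correct and is essentially a repackaging of the paper's argument in the language of quadratic twists rather than base change. The paper finds a quadratic extension $l/k$ with $w(A/l) \neq w(A/k)$, so that the Parity Conjecture forces $\rk A(l) > \rk A(k)$, and then feeds this into Proposition~\ref{prop:basic}, whose engine is the anti-invariant subgroup $S = \{p - \sigma(p) : p \in A(l)\}$ of rank $r_l - r_k$, each of whose elements maps to a $k$-point of $K$. Your twist $A^\chi(k)$ is, up to finite index, exactly this $S$ (via the eigenspace decomposition $A(l)\otimes\bbQ = A(k)\otimes\bbQ \oplus A^\chi(k)\otimes\bbQ$), and your observation that $A^\chi/\{\pm 1\} \cong K$ over $k$ is the conceptual reason why $S$ lands in $K(k)$. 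Likewise, the factorisation $w(A/l) = w(A/k)\cdot W(A^\chi)$ shows that the paper's search for $l$ with $w(A/l)\neq w(A/k)$ is identical to your search for $\chi$ with $W(A^\chi)=-1$; the local root-number analysis you leave as a black box is precisely what the lemmas of Section~4 carry out. The Mordell--Lang step is also the same on both sides. So the two arguments are equivalent; yours is arguably tidier in that it bypasses the explicit anti-invariant construction, at the cost of not spelling out the root-number input that the paper does provide.
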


\begin{theorem}\label{thm:uncon1}
Let $C$ be a hyperelliptic curve over a number field $k$ with a rational point $p \in C(k)$. Suppose that the Jacobian $J$ of $C$ is simple over every quadratic extension of $k$. Then for any dense open subset $U$ of the Kummer surface $K$ of $J$, there exists a constant $c_U >0$ such that
\beq
n_{U}(B) > c_U B^{{1/2}}. 
\eeq
\end{theorem}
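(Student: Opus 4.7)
The plan is to exhibit a non-torsion $k$-rational point on a suitable quadratic twist of $J$, and then count its integer multiples in $K$ via Mordell--Lang.

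The hyperelliptic involution $\iota$ is canonical and hence defined over $k$. Consider the morphism $\phi\colon C\to J$ given by $q\mapsto [q-\iota(q)]$, which is defined over $k$ and satisfies $\phi\circ\iota=-\phi$. Consequently $\pi\circ\phi\colon C\to K$ factors through $C/\iota$, a genus-zero curve; the assumption $p\in C(k)$ ensures that $C/\iota$ has a $k$-rational point (the image of $p$), so $C/\iota\cong\P^1_k$, giving a morphism $\bar\phi\colon\P^1_k\to K$ over $k$. For $x\in\P^1(k)$, a preimage $q_x\in C(\bar k)$ is defined over a field $l_x:=k(q_x)$ which is either $k$ or a quadratic extension, and $P_x:=\phi(q_x)\in J(l_x)$ satisfies $\sigma(P_x)=-P_x$ for the non-trivial element $\sigma\in\Gal(l_x/k)$ (when $l_x\neq k$). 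Thus $P_x$ is a $k$-rational point of the quadratic twist $J^{(l_x/k)}$.

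The morphism $\phi$ is non-constant (it vanishes exactly on Weierstrass points), so $\phi(C)\subset J$ is an irreducible curve. By Raynaud's theorem on Manin--Mumford, $\phi(C)\cap J_{\mathrm{tors}}$ is finite unless $\phi(C)$ is a translate of a positive-dimensional abelian subvariety of $J$ by a torsion element; the latter forces the underlying abelian subvariety to be elliptic and, by a short Galois descent, $k$-rational, contradicting the $k$-simplicity of $J$ implied by the hypothesis. Hence for all but finitely many $x_0\in\P^1(k)$ the point $P_0:=P_{x_0}$ is of infinite order in $J^{(l_0/k)}(k)$, where $l_0:=l_{x_0}$; in particular this twist has Mordell--Weil rank at least one. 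Moreover it is $k$-simple: any $k$-rational abelian subvariety would extend to an $l_0$-subvariety of $J\otimes_k l_0$, which is simple by hypothesis. Because twisting by a character valued in $\{\pm1\}$ commutes with $[-1]$, the Kummer of $J^{(l_0/k)}$ is canonically identified with $K$, and we write $\pi'\colon J^{(l_0/k)}\to K$ for the quotient.

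For any $n\in\Z$ one has $h(nP_0)=n^2 h(P_0)$, so there are $\gg B^{1/2}$ integers $n$ with $h(nP_0)\leq B$. Their images $\pi'(nP_0)\in K(k)$ yield (up to the identification $n\leftrightarrow -n$) $\gg B^{1/2}$ distinct rational points of $K$ of height at most $B$. For any dense open $U\subset K$, $(\pi')^{-1}(U)\subset J^{(l_0/k)}$ is a dense open, and since $J^{(l_0/k)}$ is $k$-simple with $P_0$ non-torsion, $\Z P_0$ is Zariski dense in $J^{(l_0/k)}$. Faltings' theorem (Mordell--Lang for abelian varieties) then shows that the intersection of $\Z P_0$ with the proper closed subset $J^{(l_0/k)}\setminus(\pi')^{-1}(U)$ is finite; hence all but finitely many $\pi'(nP_0)$ lie in $U(k)$, yielding $n_U(B)\geq c_U B^{1/2}$.

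The main technical step is the twist construction combined with the Manin--Mumford input; the rest is standard. The simplicity hypothesis on $J$ plays a dual role: it both precludes the exceptional case of Manin--Mumford for $\phi(C)$, and ensures $k$-simplicity of the twist, enabling the final Mordell--Lang application.
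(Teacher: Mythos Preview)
Your proof is correct and uses the same essential ingredients as the paper's: points on $C$ over quadratic extensions yield points on $J$, Manin--Mumford guarantees some are non-torsion, and Faltings' Mordell--Lang handles the complement of $U$. The organisation differs in two cosmetic ways. First, the paper uses the Abel--Jacobi embedding $q\mapsto[p-q]$ to obtain a point of infinite order in $J(l)$ for some quadratic $l/k$, and then invokes its general Proposition~\ref{prop:basic} (which packages the counting via $r=\max(r_k,r_l-r_k)\ge 1$); you instead use $q\mapsto[q-\iota(q)]$, read the resulting anti-invariant points as $k$-points of the quadratic twist $J^{(l_0/k)}$, note that this twist has the same Kummer as $J$, and count multiples of $P_0$ directly. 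Second, your $P_x=[q_x-\iota(q_x)]$ is precisely (up to sign) the output of the paper's Lemma~\ref{lem:q} applied to $[p-q_x]$, and the twist viewpoint is exactly the identity $\rk J^{(l)}(k)=\rk J(l)-\rk J(k)$ that the paper records in \S5.1. So the two arguments are equivalent; yours is a little more self-contained, while the paper's routes the counting through a reusable proposition that it also needs for Theorems~\ref{thm1} and~\ref{thm2}.
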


\begin{theorem}\label{thm:uncon2}
Let $C$ be a hyperelliptic curve over a number field $k$, such that $C$ has a $k$-rational Weierstrass point and the Jacobian $J$ of $C$ is simple. Then for any open subset $U$ of the Kummer $K$ of $J$, there exists a constant $c_U>0$ and an integer $n_U>0$ such that
\beq
n_U(B) >c_U \exp(B/n_U). 
\eeq
\end{theorem}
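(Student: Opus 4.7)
The plan is to produce a rational curve $R\cong\bbP^1\hookrightarrow K$ defined over $k$ and count on it. Let $w\in C(k)$ be the rational Weierstrass point and let $f\colon C\hookrightarrow J$ be the Abel--Jacobi embedding $P\mapsto[P-w]$. The hyperelliptic relation $P+\iota(P)\linequiv 2w$ translates to $f\circ\iota=[-1]_J\circ f$; that is, the hyperelliptic involution $\iota$ restricts from $[-1]_J$. Hence $\pi\circ f\colon C\to K$ is constant on $\iota$-orbits and factors as $\pi\circ f=\phi\circ\phi_0$, where $\phi_0\colon C\to\bbP^1$ is the hyperelliptic quotient and $\phi\colon\bbP^1\to K$ is a morphism. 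A short check (if $\phi(t_1)=\phi(t_2)$, lift to $P_i\in C(k\algcl)$ and use that $f$ is a closed immersion to see $P_1\in\{P_2,\iota(P_2)\}$) shows $\phi$ is injective on geometric points; in particular $\bbP^1(k)$ injects into $K(k)$ via $\phi$, and $R:=\phi(\bbP^1)$ is an irreducible rational curve.

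Next I compare heights. Set $\Theta:=2\vartheta$, which is symmetric and ample on $J$, so $\hat h_\Theta$ differs from a Weil height $h_\Theta$ by $O(1)$. Thus $h_K(\pi(q))=\hat h_\Theta(q)=h_\Theta(q)+O(1)$, and pulling back along $f$ and using functoriality of Weil heights gives $h_K(\phi(\phi_0(P)))=h_{f^*\Theta}(P)+O(1)$. Symmetry of $\Theta$ makes $f^*\Theta$ an $\iota$-invariant line bundle on $C$, so it descends along $\phi_0$; from the intersection $\vartheta\cdot[f(C)]=g$ one has $\deg_C f^*\Theta=2g$, whence $f^*\Theta\cong\phi_0^*\cO_{\bbP^1}(g)$. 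Combining,
\[
    h_K(\phi(t)) = g\cdot h_{\mathrm{std}}(t)+O(1) \quad\text{for all } t\in\bbP^1(k),
\]
where $h_{\mathrm{std}}$ is the standard logarithmic height on $\bbP^1$.

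Schanuel's theorem gives $\#\{t\in\bbP^1(k):h_{\mathrm{std}}(t)\le X\}\asymp\exp\bigl(2[k:\bbQ]\,X\bigr)$, so for any integer $n_U>g/(2[k:\bbQ])$ there is a constant $c>0$ with $\#\{t\in\bbP^1(k):h_K(\phi(t))\le B\}\ge c\exp(B/n_U)$ for all sufficiently large $B$. Finally, for any non-empty open $U\subseteq K$ the set $R\setminus U$ is a proper closed subset of the irreducible curve $R$, hence finite; discarding the corresponding $t$ removes only a bounded number of points from the count, and absorbing this error into $c_U$ yields the desired bound $n_U(B)>c_U\exp(B/n_U)$.

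The main technical obstacle is the height bookkeeping: checking in the normalisation of Section~\ref{sec:notation} that $f^*\Theta$ descends to $\cO_{\bbP^1}(g)$ as a genuine line bundle (not just a divisor class), and that the N\'eron--Tate--Weil comparison contributes only an $O(1)$ error through each pullback in the diagram $C\to J\to K$ and $C\to\bbP^1\to K$. Note that the hypothesis that $J$ is simple plays no role in this argument; it is inherited from the standing assumption of Section~\ref{sec:notation}.
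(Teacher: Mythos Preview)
Your construction of the rational curve $R\subset K$ and the height comparison are fine, but the final step fails. You assert that for any non-empty (dense) open $U\subseteq K$ the set $R\setminus U$ is a \emph{proper} closed subset of $R$. This is false in general: the complement $K\setminus U$ is a proper closed subset of $K$, but nothing prevents it from containing the curve $R$ entirely. Indeed, $U=K\setminus R$ is itself dense open (since $\dim K\ge 2$), and for this choice $R\cap U=\emptyset$, so your count produces no points at all. Your own observation that the simplicity of $J$ plays no role should have been a warning sign: without that hypothesis the statement would be about a single rational curve sitting inside a higher-dimensional variety, which cannot by itself force Zariski density.

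The paper repairs exactly this gap, and this is where simplicity enters. One considers not just $R=Y$ but the whole family $\{[n]Y:n\ge 1\}$, where $[n]$ is the endomorphism of $K$ induced by multiplication on $J$. Each $[n]Y$ is again a genus-zero curve with a rational point. If all of them were contained in a proper closed $V\subset K$, then pulling back to $W=\pi^{-1}V\subsetneq J$ and picking any $p\in\alpha(C)(\bar k)$ of infinite order, the cyclic subgroup $\langle p\rangle$ lies in $W$; by Faltings (Mordell--Lang) its Zariski closure is a translate of an abelian subvariety, which by simplicity must be all of $J$, contradicting $W\subsetneq J$. Hence some $[n]Y$ meets any given dense open $U$, and your height/Schanuel argument then applies to that curve. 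So your proof is essentially the paper's argument with the crucial ``spreading out'' step omitted.
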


\subsection{Structure of the proof}
We will begin by proving some basic results relating the rational points on an open subset of $K$ to the points on $A$ defined over a quadratic extension of the ground field. Theorems \ref{thm1}, \ref{thm:uncon1} and \ref{thm:uncon2} will follow easily from this. Next, we will discuss the Parity Conjecture, and show that the conditions in \ref{conditions_on_A} are sufficient to force the sign of the functional equation of $A$ to `jump up' in some quadratic extensions, which will imply Theorem \ref{thm2}. 
We then collect some easy consequences of these results, including the construction of an infinite family of K3 surfaces with (conditionally) dense rational points. Finally, we compute the Picard ranks of some Kummer surfaces, and comment on how our results relate to the moduli of K3 surfaces. 

\section{The basic results}

The following lemma is the basic tool for constructing rational points on $K$:
\begin{lemma}\label{lem:q}
Let $\l/\k$ be a quadratic extension, and let $p \in A(\l) $ be such that the image of $p$ in $A(l)/A(k)$ has infinite order. Write $\sigma$ for a generator of the Galois group of $l$ over $k$. Write $q \defeq p - \sigma(p) \in A(\l)$. Then

1) $q$ has infinite order in $A(l)/A(k)$ and maps to a $\k$-rational point on the Kummer $K$;

2) $\hh(q) \le 4\hh(p)$. 
\end{lemma}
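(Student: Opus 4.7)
The plan is to handle the three assertions separately, exploiting the simple observation that $\sigma(q) = -q$.

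For the $k$-rationality of $\pi(q)$, I would first apply $\sigma$ to $q = p - \sigma(p)$ to get $\sigma(q) = \sigma(p) - \sigma^2(p) = \sigma(p) - p = -q$ (using $\sigma^2 = \mathrm{id}$). Since the quotient map $\pi \colon A \to K$ identifies $q$ with $-q$, this gives $\sigma(\pi(q)) = \pi(\sigma(q)) = \pi(-q) = \pi(q)$, so $\pi(q)$ is fixed by $\mathrm{Gal}(l/k)$ and therefore lies in $K(k)$.

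For the claim that $q$ has infinite order in $A(l)/A(k)$, I would argue by contradiction: suppose $nq \in A(k)$ for some $n > 0$. The trace $t \defeq p + \sigma(p)$ is $\sigma$-invariant and so lies in $A(k)$. Adding, we find
\beqs
2np = n(p+\sigma(p)) + n(p-\sigma(p)) = nt + nq \in A(k),
\eeqs
so the class of $p$ in $A(l)/A(k)$ has order dividing $2n$, contradicting the hypothesis. The main (minor) pitfall here is to remember to combine $nq$ with the trace $nt$; just examining $nq$ by itself only tells you that $nq$ is 2-torsion via $\sigma(nq) = -nq = nq$.

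For the height bound, I would use that the canonical height with respect to $2\vartheta$ is a quadratic form, and in particular satisfies the parallelogram law
\beqs
h(x+y) + h(x-y) = 2h(x) + 2h(y).
\eeqs
Applying this with $x=p$, $y=\sigma(p)$ and using that $h$ is Galois-invariant (so $h(\sigma(p)) = h(p)$) gives
\beqs
h(q) = h(p-\sigma(p)) = 2h(p) + 2h(\sigma(p)) - h(p+\sigma(p)) \le 4h(p),
\eeqs
since $h \ge 0$. None of the three steps presents any real obstacle; the content is simply combining Galois-invariance of the canonical height, the quadraticity of $h$, and the observation $\sigma(q) = -q$.
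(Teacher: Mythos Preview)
Your proof is correct and follows essentially the same route as the paper: the observation $\sigma(q)=-q$ gives $k$-rationality of $\pi(q)$, and the height bound is the parallelogram law. For the infinite-order step you use the trace $t=p+\sigma(p)\in A(k)$ to get $2np\in A(k)$, whereas the paper instead notes that $nq\in A(k)$ together with $\sigma(nq)=-nq$ forces $2nq=0$, hence $2np=2n\sigma(p)=\sigma(2np)\in A(k)$; so your aside that ``examining $nq$ by itself only tells you that $nq$ is $2$-torsion'' undersells that route---it is in fact sufficient, and is exactly what the paper does.
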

\begin{proof}
Clearly $\sigma(q) = -q$ so $q$ maps to a rational point on $K$. 
To prove Assertion (1), it remains to check that $q$ has infinite order in $A(l)/A(\k)$. Well, otherwise there exists an $m>0$ such that $mq = \sigma(mq) = -mq$, so $2mq = 0$, and hence $\sigma(2mp)  = 2mp$, contradicting our assumption that $p$ has infinite order in $A(l)/A(\k)$. 

Assertion (2) follows from the parallelogram law. 
\end{proof}

We recall
\begin{lemma}\label{lem:M-L}
Let $A$ be an abelian variety, and $l/k$ a finite extension such that the base-change $A_{l}$ is simple. Let $Z$ be a proper Zariski-closed subset of $A$. Then $Z(l)$ is finite. 
\end{lemma}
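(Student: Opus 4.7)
The plan is to deduce this from Faltings's theorem on rational points of subvarieties of abelian varieties (the Mordell--Lang conjecture over number fields). That theorem states: for any closed subvariety $X$ of an abelian variety $B$ over a number field $F$, the set $X(F)$ is contained in a finite union of cosets $\bigcup_i (x_i + C_i)$, where each $C_i$ is an abelian subvariety of $B$ and each coset $x_i + C_i$ is contained in $X$. Everything else in the argument is then a formal consequence of the simplicity hypothesis.

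First I would reduce to the case $k = l$. Since $Z$ is defined over $k$, its base change $Z_l := Z \times_k l$ is a proper Zariski-closed subset of $A_l$, and trivially $Z(l) = Z_l(l)$. So it suffices to prove $Z_l(l)$ is finite, under the hypothesis that $A_l$ is simple and $Z_l \subsetneq A_l$ is closed.

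Next I would apply Faltings's theorem with $X = Z_l$, $B = A_l$, $F = l$. This yields cosets $x_i + C_i \subset Z_l$, with each $C_i \subset A_l$ an abelian subvariety, such that $Z_l(l) \subset \bigcup_i (x_i + C_i)$. By simplicity of $A_l$, each $C_i$ is either $\{0\}$ or $A_l$. Because $Z_l$ is a proper closed subset of $A_l$, no translate of $A_l$ can be contained in $Z_l$, so every $C_i$ equals $\{0\}$. Consequently each coset is a single point $\{x_i\}$, and $Z(l) = Z_l(l)$ lies in the finite set $\{x_1, \dots, x_N\}$.

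The only real input is Faltings's theorem; it is the substantive obstacle, but once it is invoked the remainder of the argument --- ruling out positive-dimensional abelian subvarieties using simplicity of $A_l$ and properness of $Z$ --- is a one-line deduction. If one wished to be cautious about the distinction between $l$-rational and geometric abelian subvarieties in the formulation of Mordell--Lang, one could apply the theorem after enlarging $l$ to a field over which the relevant $C_i$ become defined and note that any $C_i$ of positive dimension would, by Galois descent of $Z_l$, still force a positive-dimensional abelian subvariety into the Zariski closure and thus contradict simplicity of $A_l$ (over an appropriate extension); but for the intended application in the paper, where $A$ is assumed simple over every quadratic extension of $k$, the clean form above is all that is needed.
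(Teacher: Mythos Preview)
Your proof is correct and is exactly the approach the paper takes: it simply cites Faltings's theorem on the Mordell--Lang conjecture, and you have spelled out the one-line deduction from simplicity that the paper leaves implicit.
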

\begin{proof}
This follows immediately from the Mordell-Lang Conjecture, a theorem due to Faltings \cite{FaltingsMordellLang}.  
\end{proof}

Our basic result is then
\begin{proposition}\label{prop:basic}
Let $A$ and $K$ be as above, and $U \subset K$ dense open. Suppose that $l/k$ is a quadratic extension, and set $r_{l} = \rk(A(l))$, $r_{k}  = \rk(A(k))$, $r = \max(r_{k}, r_{l}-r_{k})$. Then there exists a constant $c>0$ such that 
\beq
n_U(B) > c B^{r/2}. 
\eeq
\end{proposition}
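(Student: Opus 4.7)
The plan is to split into two cases according to whether the maximum $r = \max(r_k, r_l - r_k)$ is attained by $r_k$ or by $r_l - r_k$, and in each case to produce a finitely generated abelian group of $k$-rational points on $U$ of rank exactly $r$. A routine Euclidean lattice-point count with respect to the N\'eron--Tate quadratic form $h$ then yields the claimed bound.

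\textbf{Case $r = r_k$.} Here the $k$-rational points of $A$ themselves suffice. Since $h$ is a positive-definite quadratic form on the rank-$r_k$ lattice $A(k)/\mathrm{tors}$, a standard lattice-point count in the associated Euclidean space gives $\gg B^{r_k/2}$ points of $A(k)$ with $h \le B$. The assumption that $A_l$ is simple forces $A/k$ to be simple as well, hence Lemma \ref{lem:M-L} applied to the proper Zariski-closed subset $Z = \pi^{-1}(K \setminus U) \subset A$ shows $Z(k)$ is finite. Since $\pi$ has degree $2$, we obtain $\gg B^{r_k/2}$ points of $U(k)$ of height at most $B$.

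\textbf{Case $r = r_l - r_k$.} Here Lemma \ref{lem:q} manufactures $k$-rational points of $K$ from $l$-rational points of $A$. Put $V = A(l) \otimes_\Z \R$ equipped with the N\'eron--Tate form. The Galois generator $\sigma$ preserves $h$ (the polarisation being $k$-rational), so it acts as an isometry on $V$, and the decomposition $V = V^+ \oplus V^-$ into $\pm 1$-eigenspaces is orthogonal: for $x \in V^+$ and $y \in V^-$ one has $\langle x, y \rangle = \langle \sigma x, \sigma y \rangle = -\langle x, y \rangle$. Since $V^+ = A(k) \otimes \R$, we have $\dim V^- = r_l - r_k$. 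For $p \in A(l)$ the point $q = p - \sigma(p)$ equals $2 p^-$ in this decomposition, whence $h(q) = 4 h(p^-) \le 4 h(p)$, recovering the bound of Lemma \ref{lem:q}. The subgroup $L = \{p - \sigma(p) : p \in A(l)\} \subset A(l)$ has rank $r_l - r_k$ and its image in $V$ is a full-rank lattice in $V^-$; counting non-torsion points of $L$ of height $\le B$ gives $\gg B^{(r_l - r_k)/2}$ elements. Each such $q$ maps by Lemma \ref{lem:q} to a $k$-rational point of $K$, Lemma \ref{lem:M-L} applied over $l$ discards only finitely many whose image leaves $U$, and the identification $\pi(q) = \pi(-q)$ costs at most a factor of $2$. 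The two cases together establish the claim.

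\textbf{Main obstacle.} The only real subtlety is establishing the orthogonality of $V^+$ and $V^-$ under the N\'eron--Tate form, which makes the height bound of Lemma \ref{lem:q} sharp enough to fit into an exact lattice-point count. This is a one-line eigenvalue computation using the fact that $\sigma$ is an $h$-isometry; the remainder is bookkeeping together with uniform applications of Lemma \ref{lem:M-L} to control the deletion of $\pi^{-1}(K \setminus U)$.
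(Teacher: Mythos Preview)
Your proof is correct and follows essentially the same route as the paper: in both cases you count lattice points in a rank-$r$ subgroup of $A(l)$ with respect to the N\'eron--Tate form, and use Lemma~\ref{lem:M-L} plus $\deg\pi=2$ to pass to $U$. The only difference is in how you justify $\rk L = r_l - r_k$: you use the orthogonal eigenspace decomposition $V=V^+\oplus V^-$, whereas the paper simply reads it off the exact sequence $0\to A(k)\to A(l)\to S\to 0$. Both arguments are fine, but note that the orthogonality you flag as the ``main obstacle'' is not actually used in your count (you count directly in $L$, not via the bound $h(q)\le 4h(p)$), so that discussion, while correct, is superfluous.
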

\begin{proof}
It is standard that $n_A(B) \gg B^{r_{k}/2}$. By Lemma  \ref{lem:M-L} we know that only finitely many points of $A(k)$ fail to map to points of $U$, and since the canonical map $\pi: A \rightarrow K$ has degree 2, we are done. 

It remains to show that $n_U(B) \gg B^{(r_{l}-r_{k})/2}$. Writing $\sigma$ for the generator of $\Gal(l/k)$ we set 
\beq
S = \{p - \sigma(p) : p \in A(l) \}, 
\eeq
giving an exact sequence
\beq
0 \rightarrow A(k) \rightarrow A(l) \rightarrow S \rightarrow 0, 
\eeq
so $\rk(S) = r_{l}-r_{k}$. Thus 
\beq
\#S(B) \gg B^{(r_{l}-r_{k})/2}, 
\eeq
and by the same arguments as before, we see that only finitely many points of $S$ fail to map to points of $U$, and $\pi$ has finite degree, so we are done. 
\end{proof}


\begin{proof}[Proof of Theorem \ref{thm1}]
Result (1) now follows easily from the observation that for any $r>0$, we can find a finite extension $f/k$ such that $A(f)$ has rank at least $2(r+\rk(A(k))$; indeed, this result needs far less than we have shown above. 
\end{proof}

\begin{proof}[Proof of Theorem \ref{thm:uncon1}]
Write $\pi_1:C \rightarrow \bbP^1$ for the projection map. Any $k$-rational point $x$ on $\bbP^{1}$ will yield a point $q$ on C defined over a quadratic extension, such that $\pi_{1}(q) = x$. The divisor class $[p-q]$ then gives a point on $J$ defined over a quadratic extension. 
However, there are infinitely many such points $x$ and hence infinitely many such points $[p-q]$, and so by Manin-Mumford, they cannot all be torsion, and so we have (infinitely many) points of infinite order defined over quadratic extensions of $k$. The result then follows from Proposition \ref{prop:basic}. 
\end{proof}

Note that this shows that in every dimension, there are infinitely many Kummer varieties where we can show unconditionally that the rational points are Zariski dense. Note also that any hyperelliptic curve aquires a rational point over an at-worst-quadratic extension (which is easy to compute). 

\begin{proof}[Proof of Theorem \ref{thm:uncon2}]
Retaining the notation from the previous proof, and write $p_0$ for a $k$-rational Weierstrass point of $C$ (assumed to exist). We use this point to define an Abel-Jacobi map
\beq
\begin{split}
\alpha: C & \rightarrow J\\
p & \mapsto [p - p_0]. \\
\end{split}
\eeq
Since $p_0$ is a Weierstrass point, the hyperelliptic involution commutes with the involution in the group law on $J$, and so the image $\pi(\alpha(C)) \subset K$ is a copy of $\bbP^1$ with a rational point (for example, the image of $p_0$). We denote this rational curve by $Y$. This already shows that we have exponential growth of rational points on $K$, but now we will make use of the simplicity of $J$ to show that we in fact have exponential growth on every dense open subset. 

Given $n \in \bbZ_{>0}$, we write $[n]:J \rightarrow J$ for the `multiplication by $n$' map. This induces an endomorphism of $K$, which we shall by abuse of notation also denote $[n]$. Consider the set of curves $\{[n]Y :n \in \bbZ_{>0}\}$. As these all have geometric genus zero and posess a rational point, it is enough to show that they cannot all be contained in a proper closed subscheme of $K$. 

Suppose for the purposes of contradiction that this fails; all the $[n]Y$ are contained in some proper closed subscheme $V \subset K$. Write $W \defeq \pi^{-1}V$, a proper closed subscheme of $J$ containing the identity. Arguing as in the previous proof, choose a point $p \in \alpha(C)(\overline{k})$ of infinite order defined over the algebraic closure of $k$. Then the subgroup $<p>$ of $J(\overline{k})$ generated by $p$ is contained in $W$ by construction. By Mordell-Lang, we know that the Zariski and configuration closures of $<p>$ in $J$ coincide. The Zariski closure of $<p>$ is clearly contained in $W$, but the configuration closure must be the whole of $J$ since it is not a finite set of points. Thus $W = J$. 

We now see that every dense open subset of $K$ contains a dense open subscheme of a genus zero curve with a rational point, from which our bounds follow. 
\end{proof}

\section{Parity and ranks}
The Conjecture of Birch and Swinnerton-Dyer predicts that the order of vanishing at $s=1$ of the L-series associated to an abelian variety is equal to its rank. Reducing this modulo 2, we should find that the parity of the rank is equal to the sign of the function equation of the L-series. This statement has the advantage that, although the analytic continuation of the L-series to $s=1$ is only conjectural, the sign of the functional equation (if the continuation exists) is given by the root number, whose existence is not conjectural. Thus it is possible to state the Parity Conjecture, whose statement does not depend on the existence of analytic continuations of L-functions:
\begin{conjecture}\label{conj:parity}
Let $X$ be an abelian variety over a number field, with rank $r$ and global root number $w(X)$ (as defined for example in \cite{dokchitser2009regulator}). Then
\beqs
w(X) \equiv r \mod 2. 
\eeqs
\end{conjecture}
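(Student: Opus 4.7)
The plan is to attack the statement via the factorisation $w(X) = \prod_v w_v(X)$ of the global root number into local root numbers and to match this with a parity statement for the Selmer group, following the strategy pioneered by T.\ and V.\ Dokchitser in the elliptic-curve setting. First, I would fix a prime $p$ and replace Conjecture \ref{conj:parity} by the $p$-parity conjecture, which asserts that $w(X)$ agrees modulo $2$ with the $\mathbb{Z}_p$-corank of the $p^\infty$-Selmer group of $X$. This suffices to recover the original statement once one invokes the Cassels-Tate pairing, which forces the non-divisible part of $\Sha(X)[p^\infty]$ to have even $\mathbb{F}_p$-dimension; hence the Selmer corank and the Mordell-Weil rank have the same parity.

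The second step is to tackle the $p$-parity conjecture via regulator constants. To any finite Galois extension $F/k$ with group $G$ and any self-dual $\mathbb{Q}_p$-representation $\tau$ of $G$, one associates a regulator constant $\mathcal{C}(\tau) \in \mathbb{Q}^\times / (\mathbb{Q}^\times)^2$; the twisted Birch-Swinnerton-Dyer formula then predicts an equality, modulo squares, between the $p$-adic valuation of $\mathcal{C}(\tau)$ and the product $\prod_v w_v(X, \tau)$ of twisted local root numbers. Brauer's induction theorem expresses the trivial character of $G$ as a virtual $\mathbb{Q}$-linear combination of characters induced from cyclic subgroups, which lets one bootstrap the $p$-parity conjecture for $X/k$ from its analogues over intermediate solvable subextensions, where cyclotomic and Iwasawa-theoretic techniques are available.

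The principal obstacle is the explicit computation of local root numbers and regulator constants when $X$ has dimension at least $2$. At a finite place $v$ of bad reduction, $w_v(X)$ is an epsilon factor attached to the Weil-Deligne representation on the Tate module, and closed-form analogues of Rohrlich's formulas for elliptic curves are simply not available in higher dimension, particularly when the inertia at $v$ acts with wild ramification on $T_\ell X$. I would therefore first restrict attention to the semistable case, where $w_v(X)$ can be read off the toric rank of the N\'eron component group and matched with the local Tamagawa contribution on the Selmer side; only afterwards would I attempt the wildly ramified places by a careful dissection of the full monodromy filtration, and I expect that completing this step in full generality will require genuinely new input (for instance a functorial transfer to an automorphic setting) rather than a direct extension of the elliptic-curve arguments.
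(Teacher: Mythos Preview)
The statement you are attempting to prove is labelled \emph{Conjecture} in the paper, and the paper makes no attempt to prove it. It is an open problem for abelian varieties in general; the paper merely states it, cites \cite{dokchitser2009regulator} for the definition of the root number, and then \emph{assumes} it as a hypothesis in Theorem~\ref{thm2} and in the subsequent proposition on ranks over quadratic extensions. So there is no ``paper's own proof'' to compare against.

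Your outline is a reasonable sketch of the Dokchitser--Dokchitser programme (reduce to the $p$-parity conjecture via Cassels--Tate, then attack $p$-parity through regulator constants and Brauer induction), and you correctly identify the genuine obstruction: the local root-number computations at places of wild ramification in dimension $\ge 2$ are not available, and your own final paragraph concedes that completing the argument would ``require genuinely new input''. That is an honest assessment, but it also means what you have written is a research proposal rather than a proof. For the purposes of this paper you should treat Conjecture~\ref{conj:parity} as an assumption, not as something to be established.
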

We will apply this to the base change of $A$ to quadratic extensions of $k$. We will control the root number by local considerations, and use this to force its parity to change as we go from $k$ to a quadratic extension. This will force the rank to increase, which was what we wanted. We write
\beq
w(X) = \prod_{\nu} w(X)_{\nu}
\eeq
where the product is over a proper set of absolute values for $k$, and $w(X)_{\nu}$ denotes the local root number at $\nu$, as defined in \cite{dokchitser2009regulator}.

\begin{lemma}
Let $\l/k$ be a finite extension. Fix a place $\nu$ of $k$. 

1) If $\nu$ is non-Archimedean, suppose the N\'eron model of $A$ has split-semistable reduction over $\nu$.  Write $t$ for the rank of the (split) toric part of the connected component of the N\'eron model of $X/\mathcal{O}_{k}$. Then for any place $\nu'$ of $\l$ dividing $\nu$, the local root number is given by
\begin{equation}
w(A/\l)_{\nu'} = (-1)^{t}. 
\end{equation}

2) If $\nu$ is Archimedean, then 
\begin{equation}
w(A/\l)_{\nu'} = (-1)^{\dim(A)}. 
\end{equation}
\end{lemma}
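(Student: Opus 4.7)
The plan is to compute both local root numbers by analysing the associated Weil--Deligne representations attached to $V_\ell(A)$ in the non-archimedean case, and the Hodge structure on $H^1(A)$ in the archimedean case.

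For (1), the first observation is that the Weil--Deligne representation attached to $A/l_{\nu'}$ is simply the restriction of the one attached to $A/k_\nu$ along the inclusion of Weil groups $W_{l_{\nu'}} \hookrightarrow W_{k_\nu}$. In particular, the monodromy operator $N$ is preserved, so $A/l_{\nu'}$ again has split-semistable reduction at $\nu'$ with toric rank equal to $t$. It therefore suffices to prove $w(A/k)_\nu = (-1)^t$, and then to verify that the root number is unchanged under restriction to a finite-index subgroup in this unramified-plus-Steinberg situation (which follows from the standard formulas for induced representations, or directly from the decomposition below).

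Next I would use the filtration on $V_\ell(A)$ coming from the N\'eron model: the connected component of the special fiber is an extension of an abelian variety $B$ of dimension $g-t$ by a split torus $T$ of rank $t$, and correspondingly the Weil--Deligne representation decomposes (up to Frobenius semi-simplification) as the direct sum of the unramified representation attached to $V_\ell(B)$ and of $t$ copies of the ``special'' representation $\mathrm{sp}(2)$, one for each $\bbG_m$-factor in $T$. By multiplicativity of root numbers, $w(A/k)_\nu$ is the product of the corresponding local root numbers. The unramified piece from $B$ contributes $+1$ (a standard computation, using that $V_\ell(B)$ is symplectically self-dual via the polarisation), while each $\mathrm{sp}(2)$ contributes $-1$ in the normalisation of \cite{dokchitser2009regulator}. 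Multiplying yields $(-1)^t$, as claimed.

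For (2), the archimedean local root number depends only on the Hodge structure on $H^1(A_\bbC)$, which is pure of weight $1$ with $h^{1,0}=h^{0,1}=g := \dim(A)$ and is unchanged by any base change. Applying Deligne's formula for archimedean epsilon factors to this Hodge structure gives $w(A/l)_{\nu'}=i^{2g}=(-1)^g$ at both real and complex places; at a real place one additionally incorporates the action of complex conjugation on the Hodge decomposition, but since it simply swaps $H^{1,0}$ and $H^{0,1}$ the final answer is the same. The main obstacle throughout is keeping track of sign and normalisation conventions (Tate twists on $V_\ell(A)$, the self-duality induced by the principal polarisation, the choice of additive character and Haar measure for the local epsilon factors, and the precise form of $\mathrm{sp}(2)$); I would fix these once and for all by following \cite{dokchitser2009regulator}, after which both computations are routine.
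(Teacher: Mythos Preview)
Your argument is correct, but the paper takes a shorter route by quoting existing formulas rather than unpacking the Weil--Deligne representation. For (1) the paper applies \cite[Proposition~3.23]{dokchitser2009regulator} to the trivial representation, which gives
\[
w(A/l)_{\nu'} = (-1)^{\langle \mathbf{1},\, X(\mathcal{T}^{*})\rangle},
\]
where $X(\mathcal{T}^{*})$ is the character module of the dual of the toric part; since the torus is split this module is $t$ copies of the trivial representation and the inner product is $t$. For (2) the paper simply cites \cite[Lemma~3.1.1]{sabitova2005root}. Your approach is essentially what lies behind those cited results: you decompose the Weil--Deligne representation into an unramified symplectic piece (root number $+1$) and $t$ copies of $\mathrm{sp}(2)$ (each $-1$), and you compute the archimedean factor directly from the Hodge numbers via Deligne's recipe. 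This is more work but more self-contained, and it makes transparent why the answer is stable under the finite extension $l/k$ (the split torus stays split, the Hodge numbers are unchanged). The paper's version buys brevity at the cost of relying on black-box references; both are valid.
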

\begin{proof}
1) Applying the formula in \cite[Proposition 3.23]{dokchitser2009regulator} to the trivial representation yields
\begin{equation}
w(A/\l)_{\nu'} = (-1)^{\Span{1, X(\mathcal{T}^{*})}}, 
\end{equation}
where $X(\mathcal{T}^{*})$ is the character group of the dual of the (split) toric part of the connected component of the identity of the N\'eron model of $A/\l$. This is a direct sum of $t$ copies of the trivial representation (since the rank is invariant under finite extensions, and the torus is split), and hence the inner product evaluates to $t$ as desired. 

2) This follows immediately from \cite[Lemma 3.1.1, p31]{sabitova2005root}. 
\end{proof}

In order to apply this result to force the global root number to change sign, we need to impose certain conditions on our abelian variety. 
\begin{condition}\label{conditions_on_A}
The abelian variety $A/k$ is required to satisfy at least one of the following conditions:

1) $A$ has at least one non-Archimedean place with split-semistable reduction and where the rank of the toric part of the N\'eron model is odd;

2) $\dim(A)$ is odd, and $k$ has at least one real place. 
\end{condition}

\begin{proposition}\label{prop:splitting}
Let $S_{1}$, $S_{2}$ be two finite disjoint sets of places of $k$ such that $S_{1}$ does not contain any complex places. Then there exists an infinite set  $T(S_1, S_2)$ of quadratic extensions $\l = k(\sqrt{d_l})$ such that

1) $\forall \nu \in S_{1}$ and $\l \in T(S_1, S_2)$, $\nu$ splits in $\l$;

2)  $\forall \nu \in S_{2}$ and $\l \in T(S_1, S_2)$, $\nu$ does not split in $\l$; 


\end{proposition}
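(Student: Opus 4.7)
The plan is to translate the problem into Kummer theory: quadratic extensions of $k$ correspond bijectively to nontrivial classes $d \in k^*/(k^*)^2$ via $\ell = k(\sqrt{d})$, and a place $\nu$ of $k$ splits in $\ell$ exactly when $d$ is a square in $k_\nu^*$. It therefore suffices to produce infinitely many classes $d \in k^*/(k^*)^2$ that are squares at every $\nu \in S_1$ and non-squares at every $\nu \in S_2$, since distinct classes automatically give distinct extensions. (This implicitly requires $S_2$ to contain no complex places, since every element of $\bbC^*$ is a square; I shall assume this, as otherwise the conclusion is impossible.)

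First I would produce one such $d$. The key observation is that $(k_\nu^*)^2$ is an open subgroup of $k_\nu^*$ at every place $\nu$: trivially at Archimedean places, and at non-Archimedean places by Hensel's lemma applied to $X^2 - u$ for $u$ close enough to $1$ in $k_\nu^*$. Hence the finite group $P := \prod_{\nu \in S_1 \cup S_2} k_\nu^*/(k_\nu^*)^2$ is discrete, and weak approximation implies that $k^* \to P$ is surjective. At each $\nu \in S_2$ I pick a non-square $\epsilon_\nu \in k_\nu^*$ (which exists because $\nu$ is non-complex: take $-1$ at real places, and use $|k_\nu^*/(k_\nu^*)^2| \geq 4$ at non-Archimedean places), and lift the target class that is trivial at every $\nu \in S_1$ and equals $\epsilon_\nu$ at every $\nu \in S_2$ to some $d_0 \in k^*$.

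The set of valid classes is then the coset $d_0 \cdot N$, where $N := \ker\!\bigl(k^*/(k^*)^2 \to P\bigr)$, so it remains to show $N$ is infinite. For this I would fix a sequence of distinct primes $\mathfrak{p}_1, \mathfrak{p}_2, \ldots$ of $k$ outside $S_1 \cup S_2$, and for each $i$ use weak approximation to produce $e_i \in k^*$ with $v_{\mathfrak{p}_i}(e_i)$ odd and $e_i$ a square in $k_\nu^*$ at every $\nu \in S_1 \cup S_2$. Then $e_i \in N$, and the classes $[e_i]$ are pairwise distinct in $k^*/(k^*)^2$ because $e_i e_j^{-1}$ has odd valuation at $\mathfrak{p}_i$ for $i \neq j$. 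This produces infinitely many $d = d_0 e_i$ with the required properties, hence the desired infinite set $T(S_1, S_2)$. There is no substantive obstacle — the whole argument is weak approximation — except for the mild care needed in verifying openness of $(k_\nu^*)^2$ at places of residue characteristic $2$, which amounts to a quantitative form of Hensel's lemma and does not affect the structure of the proof.
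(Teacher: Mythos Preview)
Your proof is correct and follows essentially the same approach as the paper: both reduce the splitting conditions to local square conditions on $d$ and then appeal (explicitly in your case, implicitly in the paper) to weak approximation to produce infinitely many suitable $d$. Your version is considerably more detailed than the paper's one-sentence sketch, and your remark that $S_2$ must also exclude complex places is a valid point the paper glosses over.
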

\begin{proof}
Whether a prime $\nu$ splits in $\l$ depends on whether $d_\l$ is a non-zero square in the residue field of $\nu$ (for non-Archimedean $\nu$), or whether $\nu(d_\l)$ is positive (for real Archimedean $\nu$).
\end{proof}

\begin{proposition}
Assume the Parity Conjecture holds for the base-change of $A$ to any quadratic extension of $k$, and that $A$ satisfies one of the conditions in \ref{conditions_on_A}. 
Then there exists a quadratic extension $l/k$ satisfying $\rk(A(k)) < \rk (A(l))$. 
\end{proposition}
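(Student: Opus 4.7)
The plan is to construct a quadratic extension $l/k$ satisfying $w(A/l) = -w(A/k)$. Combined with the Parity Conjecture applied to both $A/k$ and $A/l$, this forces the parities of $\rk A(k)$ and $\rk A(l)$ to differ; since the inclusion $A(k) \subseteq A(l)$ gives $\rk A(l) \geq \rk A(k)$, we then conclude $\rk A(l) > \rk A(k)$ as required.

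To engineer the sign flip I would decompose the global root numbers place by place. For a candidate quadratic extension $l/k$ and a place $\nu$ of $k$, set
$$\delta_\nu(l) = \Big(\prod_{\nu' \mid \nu} w(A/l)_{\nu'}\Big) \big/ w(A/k)_\nu \in \{\pm 1\},$$
so that $w(A/l)/w(A/k) = \prod_\nu \delta_\nu(l)$. At places of good reduction both factors are $+1$, so only the finite set $S$ of bad non-archimedean places of $A$ together with all archimedean places of $k$ contributes non-trivially. The change factor $\delta_\nu(l)$ is explicit in the tame cases: if $\nu$ splits in $l$ then $\prod_{\nu' \mid \nu} w(A/l)_{\nu'} = w(A/k)_\nu^{2} = 1$, so $\delta_\nu(l) = w(A/k)_\nu$; and if $\nu$ is archimedean, or bad non-archimedean of split-semistable reduction, and is non-split in $l$, then the preceding lemma on local root numbers yields $w(A/l)_{\nu'} = w(A/k)_\nu$ and hence $\delta_\nu(l) = 1$. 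The only uncontrolled case is a bad non-archimedean $\nu$ at which $A$ lacks split-semistable reduction and which fails to split in $l$.

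To eliminate the uncontrolled case I would invoke Proposition~\ref{prop:splitting} and force every non-split-semistable bad non-archimedean place of $A/k$ to split in $l$; each such place then contributes the known value $w(A/k)_\nu$ to the product. Writing $r_2$ for the number of complex places of $k$ (each automatically split, contributing $(-1)^{\dim A}$), these forced contributions collect into a constant
$$c = (-1)^{r_{2}\dim A} \cdot \prod_{\nu \text{ bad, not split-semistable}} w(A/k)_\nu \in \{\pm 1\}$$
depending only on $A$ and $k$. At each remaining ``free'' place (a real archimedean place, or a bad split-semistable non-archimedean place) I would prescribe non-splitting, making its change factor equal to $+1$. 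The distinguished place $\nu_0$ supplied by Condition~\ref{conditions_on_A} is among the free places, and splitting it yields $\delta_{\nu_0}(l) = (-1)^{t_{\nu_0}} = -1$ under condition~(1) or $\delta_{\nu_0}(l) = (-1)^{\dim A} = -1$ under condition~(2). After these prescriptions the product collapses to $w(A/l)/w(A/k) = c \cdot \delta_{\nu_0}(l)$, and by choosing to split $\nu_0$ precisely when $c = +1$ (and to leave it non-split when $c = -1$) one achieves the value $-1$. Proposition~\ref{prop:splitting} applied with these splitting/non-splitting prescriptions encoded in its sets $S_1$ and $S_2$ guarantees the existence of such an $l$.

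The main obstacle I anticipate is exactly the bad non-archimedean places without split-semistable reduction: the foregoing lemma on local root numbers says nothing there, and the dependence of $w(A/l)_{\nu'}$ on the local extension $l_{\nu'}/k_\nu$ is genuinely opaque to us. The device above sidesteps the issue by always forcing these places to split in $l$, which replaces each sensitive $w(A/l)_{\nu'}$ by the harmless $w(A/k)_\nu^2 = 1$. The cost is that their local root numbers over $k$ get absorbed into the fixed sign $c$, but because the hypothesis on $A$ guarantees a free toggle $\nu_0$ with $\delta_{\nu_0}(l) \in \{+1,-1\}$ at our disposal, Proposition~\ref{prop:splitting} can always correct for the value of $c$.
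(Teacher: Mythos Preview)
Your proposal is correct and follows the same overall strategy as the paper: decompose the global root number into local factors, use Proposition~\ref{prop:splitting} to prescribe splitting behaviour at finitely many places, and exploit the distinguished place $\nu_0$ from Condition~\ref{conditions_on_A} to flip the sign of $w(A/l)$ relative to $w(A/k)$.

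The one substantive difference is your treatment of the bad non-archimedean places where $A$ does \emph{not} have split-semistable reduction. The paper places all such primes into its non-splitting set $S_2$ and then asserts that the global root number changes sign; but the preceding lemma gives no information about $w(A/l)_{\nu'}$ at a non-split place of this type, so the paper is tacitly assuming that the local contribution there is unchanged under the quadratic local extension. You instead force these awkward places to \emph{split}, which makes their contribution to the ratio equal to the known quantity $w(A/k)_\nu$; you then absorb these into the constant $c$ and use the freedom at $\nu_0$ (split or not, as needed) to cancel $c$. This is a genuine refinement: it closes a gap that the paper's terse argument leaves open, at the modest cost of having to consider two cases for the splitting behaviour at $\nu_0$.
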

\begin{proof}
By assumption, $A$ satisfies one of the conditions in \ref{conditions_on_A}. As such, there exists a place $\nu_0$ of $k$ such that  for all $l/k$, and for all places $\omega $ of $l$ dividing $\nu_0$, we have that $w(A/l)_{\omega}$ is odd. We also have a finite set $S_2$ of places of $k$ such that for all places $\nu$ of $k$ outside $S_2 \cup \{\nu_0\}$, and for all finite $l/k$ and places $\omega$ of $l$ dividing $\nu$, we have that $w(A/l)_{\omega}$ is even. 

Applying Proposition \ref{prop:splitting}, we obtain an infinite set $\tilde{T}$ of quadratic extensions such that in each extension, $\nu_0$ splits and no place in $S_2$ splits. As such, for each $l \in \tilde{T}$, we find that the \emph{global} root number of $A$ over $l$ is different from the global root number of $A$ over $k$. Under the parity conjecture, this forces the rank of $A$ over each such $l$ to be different from the rank over $k$; the rank cannot decrease, so it must increase, as desired. Picking any $l$ in $\tilde{T}$, we are done. 
\end{proof}


From this and Proposition \ref{prop:basic}, Theorem \ref{thm2} follows immediately. 

\section{Examples}
In this section, we note some simple consequences of the above results, and construct some explicit examples. 

\subsection{Consequences of unbounded ranks}
It is an open question whether the ranks of quadratic twists of a fixed abelian variety are bounded. Suppose that the ranks of quadratic twists of $A$ are unbounded. Then on any dense open subset $U \subset K$, and for any $n>0$, there exists a constant $c_{U,n} > 0$ such that 
\beq
n_U(B) > c_{U,n} B^n. 
\eeq

To deduce this, it suffices to recall that quadratic twists $A^l$ of $A$ are constructed by descent from the base-change of $A$ to quadratic extensions $l/k$, and that in this setup we have $\rk(A(k)) + \rk(A^l(k)) = \rk(A(l))$. In particular, the conjecture implies that the rank of $A$ over quadratic extensions $l/k$ is unbounded, and so the result follows immediately from Proposition \ref{prop:basic}. 

\subsection{Odd dimension}
An odd-dimensional Kummer variety that is the quotient of an abelian variety simple over all quadratic extensions clearly satisfies Condition (2), and so (assuming the Parity Conjecture) has rational points growing at least like $c B^{1/2}$ on any dense open subset. Absolutely simple abelian varieties are Zariski dense in the moduli space of polarised abelian varieties, and hence we may deduced from the Parity Conjecture that `most' odd dimensional Kummers have rational points growing at least like $c B^{1/2}$ on any dense open subset. 


\subsection{Dimension 2}
Desingularised Kummer surfaces are examples of K3 surfaces. As we are concerned with rational points on open subsets, we may disregard the desingularisation, and so our results have consequences for the distribution of rational points on these K3 surfaces. 

Since the dimension is even, we must use Condition (2) to construct examples. The reduction type is a local condition, and we will see below that simplicity of $A$ can also be enforced locally, allowing us to easily construct infinite families of K3 surfaces whose rational points grow at least like $cB^{1/2}$, assuming the Parity Conjecture. We give one such construction in the remainder of this note. 

\subsubsection{The construction}
Let $p$ be a prime not dividing 66. Let $C$ be an even-degree hyperelliptic curve of genus 2 over $\bbQ$ given by an equation
$y^{2} = f(x)$, with $f \in \bbZ[x]$ a monic integral polynomial with non-vanishing discriminant $\Delta$. Suppose $f$ is of the form
\beq
x^6 + ax^2 + b
\eeq
where $a \in 1 + p \bbZ_p$, $b \in p + p^2 \bbZ_p$ and $a$, $b \in 1 + 11 \bbZ_{11}$ (the reasons for these conditions will become clear later on).  

Let $A$ denote the Jacobian of $C$, and $K$ the corresponding singular Kummer surface. Following \cite{flynn_and_smart}, $K$ may be embedded in $\bbP^{3}$ with coordinates $k_1$, $k_2$, $k_3$, $k_4$ as a quartic hypersurface given by the equation
\beqs
(k_2^2 - 4k_1k_3)k_4^2 + (-4k_1^3 b +2 k_1^2k_3 a + 2k_3^3)k_4 -4k_1^4 a b - 4 k_1^2k_3^2b +8k_1k_2^2k_3b - 4k_2^4b -4k_2^2k_3^2a. 
\eeqs
We remark that our height $h$ coincides with the naive height on $\bbP^3$ up to $O(1)$, as noted in \cite{flynn_and_smart}. 

\newcommand{\End}{\operatorname{End}}

\begin{claim}\label{claim:simple}
The abelian variety $A$ is simple over every quadratic extension $l/\bbQ$. 
\end{claim}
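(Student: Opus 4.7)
I argue by contradiction: suppose $A_l$ is not simple for some quadratic extension $l/\bbQ$. Since $\dim A = 2$, Poincar\'e reducibility yields an $l$-isogeny $A_l \sim E_1 \times E_2$ for elliptic curves $E_i / l$. The plan is to contradict this by examining the Frobenius of $A$ at the prime $11$, for which the congruences $a \equiv b \equiv 1 \pmod{11}$ give complete control. First I would verify good reduction at $11$: for $\bar f(x) = x^6 + x^2 + 1 \in \bbF_{11}[x]$, the derivative $\bar f'(x) = 2x(3x^4 + 1)$ vanishes only at $x=0$ (where $\bar f = 1$) or at the fourth roots of $-1/3 \equiv 7 \pmod{11}$; at such a point $\bar f = 8x^2 + 1$, and $\bar f = 0$ would force $x^2 \equiv 4$, hence $x^4 \equiv 5$, contradicting $x^4 \equiv 7$. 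Thus $\bar f$ is separable and $A$ has good reduction at $11$. (The conditions on $(a,b)$ at $p$ give $A$ split semistable reduction of toric rank $1$, via the reduction $y^2 = x^2(x^4+1) \pmod p$ with a rational node and genus-one normalisation $z^2 = x^4+1$; this makes $A$ satisfy Condition~\ref{conditions_on_A}(1), but is not needed for the simplicity claim itself.)

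Next, I would compute the Weil polynomial. A direct enumeration gives $\#\bar C(\bbF_{11}) = 16$, hence $s_1 := \sum \alpha_i = -4$ for the Frobenius eigenvalues $\alpha_i$ on $T_\ell A$. A further (finite but longer) count yields $N := \#\bar C(\bbF_{121})$ and so $s_2 = 122 - N$. Newton's identities together with the Weil pairing $\alpha \leftrightarrow 11/\alpha$ then pin down the characteristic polynomial of Frobenius,
\[
P(T) = T^4 + 4 T^3 + c_2 T^2 + 44 T + 121, \qquad c_2 = 8 - s_2/2.
\]

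For any prime $\omega$ of $l$ above $11$, the Frobenius on $T_\ell A_l$ has characteristic polynomial either $P(T)$ itself (if $11$ splits or ramifies in $l$) or
\[
P_2(T) := P(\sqrt T)\, P(-\sqrt T) = T^4 + (2c_2-16)T^3 + (c_2^2-110)T^2 + 242(c_2-8)T + 14641
\]
(if $11$ is inert). Under the hypothetical isogeny $A_l \sim E_1 \times E_2$, each such polynomial must factor as a product of two monic integer quadratics of constant term $|k(\omega)| \in \{11,121\}$, coming from the $L$-factors of the $E_i$ at $\omega$. Applying the standard substitution $u = T + q/T$ with $q = |k(\omega)|$, such a factorisation is equivalent to a certain quadratic in $u$ having rational roots, i.e.\ to its discriminant being a rational square. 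A short calculation shows both discriminants to be proportional to $26 - c_2$ (namely $4(26-c_2)$ in the split/ramified case and $64(26-c_2)$ in the inert case), so both factorisations occur iff $26 - c_2 = 79 - N/2$ is a non-negative rational square.

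The main obstacle is the point count $N = \#\bar C(\bbF_{121})$, a finite but substantial enumeration (e.g.\ via evaluating Legendre-symbol character sums over $\bbF_{121}$). Once $N$ is in hand, the claim closes by a single arithmetic check that $79 - N/2$ is not a non-negative perfect square; the specific congruences $a,b \equiv 1 \pmod{11}$ have clearly been engineered so that this final inequality holds.
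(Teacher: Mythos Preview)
Your strategy is essentially the paper's: fix the reduction modulo $11$ via the congruences $a\equiv b\equiv 1\pmod{11}$, compute the characteristic polynomial $P$ of Frobenius on the Jacobian, and verify a condition that forces simplicity over every quadratic extension. The paper phrases the final step as ``$P$ is irreducible and $\bbQ(\phi^2)=\bbQ(\phi)$'' (the latter being equivalent to irreducibility of your $P_2$), whereas you phrase it as ``neither $P$ nor $P_2$ splits as a product of two Weil quadratics over $\bbZ$''. These criteria are equivalent for the purpose at hand, and your reduction of the factorisation question to the single discriminant condition $26-c_2\in\bbZ_{\ge 0}^{\,\square}$ is correct and pleasantly explicit.

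The one genuine gap is that you never perform the count $N=\#\bar C(\bbF_{121})$; the proof therefore stops one step short of the claim. The paper does carry out the computation (obtaining $s=-132$, i.e.\ $c_2=-132$). You should complete the enumeration and record the resulting $c_2$, then check $26-c_2$ explicitly; note that your criterion is delicate at the boundary (if $c_2$ happened to equal $26$ the test would be inconclusive and you would need another prime), so the actual number matters. Once $N$ is in hand your argument closes exactly as you describe.
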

\begin{proof}
We immitate the proof of Proposition 9 of \cite{FlynnPoononSchaefer}; see also \cite{stoll1996example}. We see that $C$ and hence $A$ has good reduction over $p=11$, and hence the reduction map embedds $\End A_{p}$ into $\End A_{\overline{\bbF}_{p}}$. The characteristic polynomial of the Frobenius endomorphism $\phi$ on $A$ is given by
\beq
P = X^{4} - tX^{3} + sX^{2} - ptX + p^{2} = X^4 + 4X^3 - 132X^2 + 44X + 121, 
\eeq
where
\beq
t = p + 1 - \#C(\bbF_{p}) = -4, 
\eeq
and
\beq
s = \frac{1}{2}((\#C(\bbF_{p}))^{2} - \#C(\bbF_{p^{2}})) +p -(p+1)\#C(\bbF_{p}) = -132. 
\eeq
It is easy to check that this characteristic polynomial is irreducible, and so $A$ is simple and $\bbQ(\phi) \defeq \bbQ[X]/P(X)$ is a field. To check that $A$ remains simple in every quadratic extension of $\bbQ$, we could repeat this calculation replacing $\bbF_{p}$ by its unique degree-2 extension, but it is perhaps easier (and equivalent) to check that $\bbQ(\phi^{2})$ does not lie in a proper subfield of $\bbQ(\phi)$, which can easily be checked using Sage, MAGMA or similar. 
\end{proof}

\begin{claim}
The abelian variety $A$ has split semistable reduction over $q$, with odd-dimensional toric part. 
\end{claim}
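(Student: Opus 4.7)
The plan is to work directly with the Weierstrass model $y^2 = f(x)$ over $\Zp$ and to exploit the precise congruence conditions imposed on $a$ and $b$. Since $a \in 1 + p\Zp$ and $b \in p + p^2\Zp$, the reduction is
\[
\bar{f}(x) = x^6 + x^2 = x^2(x^4+1) \in \Fp[x],
\]
and $\bar{f}'(x) = 2x(3x^4+1)$. Because $p \nmid 6$ (as $p \nmid 66$), the only common root of $\bar f$ and $\bar f'$ is $x=0$, and $x^4+1$ is separable modulo $p$; so the unique singular point of the special fibre of the naive model is $(0,0)$.

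Next I would verify that the arithmetic surface cut out by $y^2 - f(x) = 0$ in $\bbA^2_{\Zp}$ is already regular at $(0,0)$. Writing $a = 1 + p\alpha$ and $b = p\beta$ with $\alpha \in \Zp$ and $\beta \in 1 + p\Zp \subset \Zp^\times$, the defining equation reads
\[
y^2 - x^2 - x^6 - p\alpha x^2 - p\beta = 0,
\]
which modulo the square of the maximal ideal $(p,x,y)$ collapses to the nonzero element $-p\beta$. Hence the equation is not in $(p,x,y)^2$ and the scheme is regular at the node, so no blow-ups are needed: the given model is already a regular model with semistable special fibre.

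I would then read off the geometry of this special fibre. The tangent cone at $(0,0)$ is $y^2 - x^2 = (y-x)(y+x)$, which splits over $\Fp$, so the node is a split ordinary double point; substituting $y = xu$ in $y^2 = x^2(x^4+1)$ exhibits the normalization as the smooth curve $u^2 = x^4+1$ of genus $1$ over $\Fp$. Thus the stable model over $\Zp$ consists of a single smooth component of arithmetic genus $1$ joined to itself at one split node; its dual graph has one vertex and one loop, hence first Betti number equal to $1$.

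Finally, I would appeal to the standard description of the N\'eron model of the Jacobian of a semistable curve: the toric rank of the identity component equals the first Betti number of the dual graph of the stable model, and the torus is split over $\Fp$ precisely when every node of the stable model is split. Here we obtain a split torus of rank $1$, and in particular the toric part is odd-dimensional. The main nontrivial step is the regularity check, where it is essential that $v_p(b)$ equals $1$ exactly: weakening to $v_p(b) \ge 1$ would in general produce a non-regular model and force an explicit blow-up analysis to exhibit the semistable reduction.
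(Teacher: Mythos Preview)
Your proof is correct and follows essentially the same route as the paper's: identify the unique node at $(0,0)$ on the special fibre, verify regularity of the arithmetic surface there (the paper phrases this as ``the dimension of the tangent space is two'', you do the equivalent check that the defining equation has a nonzero linear term $-p\beta$ in $(p,x,y)/(p,x,y)^2$), observe that the tangent cone $y^2-x^2$ splits over $\Fp$, and conclude via the dual graph (one vertex with a loop, hence toric rank $1$) using the standard BLR description. Your write-up is in fact more careful than the paper's sketch --- you explicitly use $p\nmid 6$, compute the normalization, and note that $v_p(b)=1$ (not merely $\ge 1$) is what makes the model regular without blow-ups.
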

\begin{proof}
We consider the projective closure $\cC$ of the curve $C$ over $\bbZ_q$. 
The only non-smooth point is the node at the ideal $(x,y,q)$, and $\cC$ is regular at that point since the dimension of the tangent space to that ideal is two. The node has rational tangent directions $\pm 1$, and hence the reduction is split semistable. To see that the rank of the toric part is odd, we note that the dual graph of the special fibre is a loop, and hence by \cite[Chapter 9, Example 8, Page 246]{bosch1990neron} that the toric part has rank 1. 
\end{proof}

Thus we may conclude that, assuming the Parity Conjecture, all of the K3 surfaces constructed above have rational points growing faster than $cB^{1/2}$ on every dense open subset; in particular, rational points are Zariski dense.

\section{Picard ranks of Kummer surfaces}
When studying the arithmetic of surfaces, the Picard rank and geometric Picard rank are important invariants, appearing for example in the conjecture of Batyrev-Manin-Peyre. In this section, we will discuss the computation of the Picard and geometric Picard ranks for Kummer surfaces, and compute them for generic Kummer surfaces. 

\subsection{Geometric Picard ranks}
We begin by discussing the geometric Picard lattice and its rank. For this, our main reference is the paper \cite{elsenhans2009computation} of Elsenhans and Jahnel. Recall that the geometric Picard rank of a K3 surface ranges between 1 and 20. Write $\tilde{K}$ for the desingularisation of the Kummer variety $K$, obtained by blowing up the 16 nodes which are the images of 2-torsion points in $A$. From \cite[Fact 4.1]{elsenhans2009computation} we have
\beq
\rk \Pic \tilde{K}_{\overline{\bbQ}} = \rk NS(A_{\overline{\bbQ}}) + 16. 
\eeq
Now $NS(A_{\overline{\bbQ}})  \otimes_{\bbZ} \bbQ$ is isomorphic to the vector subspace of Rosati-invariant elements of the endomorphism ring $End^0(A_{\overline{\bbQ}})$ (see \cite[Section 20, page 189]{mumford_ab_vars}). It seems to be widely believed that the `generic' case is that $End^0(A) = \bbQ$, but the authors are not aware of a proof (or even a precise statement) of this in the literature. A precise statement and proof would take us rather far from the main thrust of this paper, and so will not be given here; instead, this will be treated in a forthcoming paper.


\renewcommand{\Pic}{\textnormal{Pic}}
\newcommand{\Kum}{\textnormal{Kum}}
\newcommand{\h}{\textnormal{H}}
\newcommand{\PGL}{\textnormal{PGL}}
\newcommand{\K}{\widetilde{K}}
\newcommand{\W}{\mathcal{W}}
\newcommand{\ol}[1]{\overline{#1}}
\newcommand{\Aut}{\operatorname{Aut}}
\newcommand{\wt}[1]{\widetilde{#1}}
\newcommand{\rank}{\operatorname{rk}}

\subsection{Picard ranks over the ground field}
Write $G_k$ for the absolute Galois group of $k$. Let $C$ be a genus 2 curve with $\W = \{w_1,\ldots,w_6\}$ its set of Weierstrass points, let $J$ be its Jacobian and $\K$ the desingularised Kummer surface of $J$. 
The Picard lattice of $\tilde{K}$ has finite index in the Galois-invariant part of the geometric Picard lattice, and so its rank may be computed given sufficient information on the Galois action. 

 The lattice $E$ generated by the exceptional curves on $\wt{K}$ is a Galois permutation module:
\beq
E = \Z e_1 \oplus \bigoplus_{1 \leq i \leq 5} \Z e_{i6} \oplus \bigoplus_{1 \leq i < j \leq 5} \Z e_{ij} 
\eeq
where the divisor $e_1 \subset \K$ corresponds to the identity of $J$, and the divisors $e_{ij} \subset \K$ each correspond to the 2-torsion point $[w_i - w_j]$ on $J$. We set:
\beq
f_1 = e_1, ~~ f_2 = \sum_{1 \leq i \leq 5} e_{i6},~~ f_3 = \sum_{1 \leq i < j \leq 5} e_{ij}.
\eeq
Generically, the image of $G = \Gal(\ol{k}/k)$ in $\Aut(\W) = S_6$  is as large as possible, i.e. equal to the whole group. If we impose the additional condition that $w_6$ be rational, the image of $G$ in $\Aut(\W)$ is $S_5$ generically. We claim that we have: $\rank E^{S_6} = 2$ and $\rank E^{S_5} = 3$. Indeed we have $\geq$ in both cases, since $f_1,f_2+f_3 \in E^{S_6}$ and $f_1,f_2,f_3 \in E^{S_5}$. It follows from the claim that, if the N\'eron-Severi rank is 1,  one has $\rank \Pic(\wt{K}) = 3$; furthermore, in the generic case such that $w_6$ is defined over $k$, one has $\rank \Pic(\wt{K}) = 4$.

We count the number of copies of the trivial representation $\mathbf{1}$ in $E \otimes \Q$ (considered successively as a $\Q[S_5]$- and a $\Q[S_6]$-module), by applying the formula
\beq
\textnormal{number of copies of $\mathbf{1}_{G}$ in $\Q[G]$-module $V$} = \frac{1}{\# G} \sum_{g \in G} \operatorname{Tr}_V(g).
\eeq
On a $\Q[G]$-module $V$ with a permutation basis it is easy to determine the trace of an element $g \in G$: it is the number of basis vectors fixed by $g$. Moreover, $\operatorname{Tr}_V$ is constant on conjugacy classes $\Gamma$ of $G$. 

We apply the formula to $G=S_5$, $V= E/(f_1) \otimes \Q$:
\begin{center}
\begin{tabular}{l|l|l|l}
$\Gamma$ 	& size of $\Gamma$ & $\operatorname{Tr}_V(g)$  for $g \in \Gamma$ & $\sum_{g \in \Gamma} \operatorname{Tr}_V(g)$ \\
\hline
\hline
(1)		& 1	& 15		& 15 \\	
\hline
(12)		& 10	& 7		& 70 \\
\hline
(123)		& 20	& 3		& 60 \\
\hline
(1234)		& 30	& 1		& 30 \\
\hline
(12345)		& 24	& 0		& 0 \\
\hline
(12)(34)	& 15	& 3		& 45 \\
\hline
(12)(345)	& 20	& 1		& 20 \\
\hline
\hline
Totals:		& 120	&		& 240
\end{tabular}
\end{center}
This proves that $\dim_{\Q} (E \otimes \Q)^{S_5} = 3$, so $\rank E^{S_5} = 3$.

We next apply the formula to $G=S_6$, $V= E/(f_1) \otimes \Q$:
\begin{center}
\begin{tabular}{l|l|l|l}
$\Gamma$ 	& size of $\Gamma$	& $\operatorname{Tr}_V(g)$ for $g \in \Gamma$ & $\sum_{g \in \Gamma} \operatorname{Tr}_V(g)$ \\
\hline
\hline
(1)		& 1	& 15		& 15 \\	
\hline
(12)		& 15	& 7		& 105 \\
\hline
(123)		& 40	& 3		& 120 \\
\hline
(1234)		& 90	& 1		& 90 \\
\hline
(12345)		& 144	& 0		& 0 \\
\hline
(123456)	& 120	& 0		& 0 \\
\hline
(12)(34)	& 45	& 3		& 135 \\
\hline
(12)(345)	& 120	& 1		& 120 \\
\hline
(12)(3456)	& 90	& 1		& 90 \\
\hline
(123)(456)	& 40	& 0		& 20 \\
\hline
(12)(34)(56)	& 15	& 3		& 45 \\
\hline
\hline
Totals:		& 720	&		& 720
\end{tabular}
\end{center}
This proves that $\dim_{\Q} (E \otimes \Q)^{S_6} = 2$, so $\rank E^{S_6} = 2$.

\subsubsection{Elliptic fibrations}
Many of the examples of K3 surfaces known to have many rational points possess elliptic fibrations, and this elliptic structure plays a major role in constructing rational points. As such, it is interesting to ask whether the Kummer surfaces we consider have elliptic fibrations. 

Recall from \cite{pyatetskii1971torelli} that a K3 surface over a field $k$ is an elliptic fibration if and only if its Picard lattice contains a non-zero element of square zero. By Hasse-Minkowski, this always happens if the Picard rank is at least 5, and so in our case we know that geometrically our surfaces always possess elliptic fibrations. However, over the ground field this does not hold, as we shall show.

We keep the assumption of geometric Picard rank 17. We show that, both in the $S_5$ and $S_6$ cases, the vector space $\Pic(\wt{K}) \otimes \Q$ equipped with the intersection form has no non-zero elements with square zero. We immediately reduce to the $S_5$ case.

In that case, a basis for $\Pic(\wt{K}) \otimes \Q$ is given by $f_1,f_2,f_3$ and a suitable hyperplane section $h$ such that the intersection pairing on  $\Pic(\wt{K}) \otimes \Q$ reads
\beq
\begin{array}{l|llll}
 	& h & f_1 & f_2 & f_3 \\
\hline
h 	& 4 & 0   & 0   & 0   \\
f_1 	& 0 & -2  & 0   & 0   \\
f_2     & 0 & 0   & -10 & 0   \\
f_3     & 0 & 0   & 0   & -20 
\end{array}
\eeq
We are asking whether the smooth quadric $2x^2-y^2-5z^2-10w^2=0$ over $\Q$ has a point $(x_0 : y_0 : z_0 : w_0)$, where $x_0,y_0,z_0,w_0$ may be assumed to be integers that are not all divisible by the same prime. Since the only solutions to $2x^2 \equiv y^2 \pmod{5}$ is $x \equiv y \equiv 0 \pmod{5}$, we have $5 \mid x_0, 5 \mid y_0$, and hence $25 \mid 5z_0^2+10w_0^2$, so $5 \mid z_0^2+2w_0^2$ and hence $5 \mid z_0, 5 \mid w_0$, a contradiction.

\section{Moduli of Kummer surfaces}
A Kummer surface is by definition the quotient of an abelian surface by $\{\pm 1\}$, and an abelian surface is (in characteristic zero) the Jacobian of a genus 2 curve, which can always be written as $y^2 = f$ for a homogeneous polynomial $f$ of degree 6. A Kummer surface has exactly 16 singular points (they are nodes), and can be embedded in $\bbP^3$ as a quartic hypersurface. We recall from \cite[Page 352]{mumford_equations_1} that any Kummer hypersurface can be written (after a linear change of coordinates) in the form
\beqs
\begin{split}
F(a,b,c,d,e) = &a(x^4 + y^4 + z^4 + w^4)  +  b(xyzw) + c(x^2y^2 + z^2w^2) \\
& + d(x^2w^2 + y^2z^2) + e(x^2z^2 + y^2w^2) = 0. 
\end{split}
\eeqs
Such equations define a 4-dimensional family of quartics with no basepoint, and so by Bertini's Theorem a generic element is non-singular, and so cannot be a Kummer. However, whenever a member of this family has one node it acquires 16 of them (by symmetry). Such a node exists if and only if the discriminant of $F(a,b,c,d,e)$ vanishes. Any member of this family with 16 nodes and no higher singlarities is a Kummer surface; this is a special case of \cite[4.4]{elsenhans2009computation}, which says that any quartic with 16 nodes is a Kummer. 
Given a quartic $K$ with 16 nodes, it is a Kummer and hence it comes from the Jacobian of some genus 2 curve. This curve can be recovered from $K$ as in \cite[4.4]{elsenhans2009computation}. 

We thus obtain a moduli space of polarised Kummer surfaces as an open subset of the discriminant locus of the polynomial $F(a,b,c,d,e)$. Given a point in that locus, we can reconstruct a hyperelliptic curve from whose Jacobian it arises. If that Jacobian is simple  (which in general it will be, as can be seen by counting dimensions of moduli spaces), then we can prove it is so using the techniques from the proof of Claim \ref{claim:simple}. Assuming the Parity Conjecture, we then know that rational points grow faster than $B^{1/2}$ on any open subset; in particular, they are dense. We can also easily determine a quadratic field over which the curve acquires a rational point, and so by Theorem \ref{thm:uncon1} we know \emph{unconditionally} that over that field, the surface has rational points growing faster than $B^{1/2}$ on any open subset. Similarly, we can determine an extension of degree at most $6$ over which the curve has a rational Weierstrass point, and so by Theorem \ref{thm:uncon2} we have unconditional exponential growth over that field on any open subset. 

Consider the case where the geometric Picard rank is 17 and the Picard rank is 3. By considering the Galois action on the Weierstrass points, we can compute for a given example the rank of the Galois invariant part of the representation `$E$' considered in the previous section, obtaining information on the Picard rank. If we could also compute the Neron-Severi rank of the Jacobian, we would be able to precisely compute the Picard rank of the desingularisation $\tilde{K}$. We note that Charles has given an algorithm to compute the Picard rank whose termination is conditional on the Hodge conjecture, but unconditional when the ground field is $\bbQ$ \cite{CharlesAlgorithm}.

\bibliographystyle{alpha} 
\bibliography{prebib.bib}

\end{document}